\numberwithin{equation}{section}
\theoremstyle{plain}
\newtheorem{thm}{Theorem}[section]
\newtheorem{cor}{Corollary}[section]
\newtheorem{conj}{Conjecture}[section]
\theoremstyle{plain}
\theoremstyle{remark}
\newtheorem{rem}{Remark}[section]
\DeclareMathOperator{\td}{d}
\newcommand{\bell}{\textup{B}}
\begin{document}

\title[Recurrence relations and inequalities for Stirling numbers]
{Diagonal recurrence relations, inequalities, and monotonicity related to Stirling numbers}

\author[F. Qi]{Feng Qi}
\address{Department of Mathematics, College of Science, Tianjin Polytechnic University, Tianjin City, 300387, China; Institute of Mathematics, Henan Polytechnic University, Jiaozuo City, Henan Province, 454010, China}
\email{\href{mailto: F. Qi <qifeng618@gmail.com>}{qifeng618@gmail.com}, \href{mailto: F. Qi <qifeng618@hotmail.com>}{qifeng618@hotmail.com}, \href{mailto: F. Qi <qifeng618@qq.com>}{qifeng618@qq.com}}
\urladdr{\url{http://qifeng618.wordpress.com}}

\begin{abstract}
In the paper, the author derives several ``diagonal'' recurrence relations, constructs some inequalities, finds monotonicity, and poses a conjecture related to Stirling numbers of the second kind.
\end{abstract}

\keywords{Stirling number of the second kind; diagonal recurrence relation; inequality; logarithmically convex sequence; monotonicity; Fa\`a di Bruno formula; Bell polynomials of the second kind; conjecture}

\subjclass[2010]{Primary 11B73; Secondary 05A18, 11B83, 11R33, 26D15, 33B10, 44A10}

\thanks{This paper was typeset using \AmS-\LaTeX}

\maketitle

\section{Introduction}

In mathematics, Stirling numbers arise in a variety of combinatorics problems. They are introduced in the eighteen century by James Stirling. There are two kinds of Stirling numbers: Stirling numbers of the first and second kinds. Some properties and recurrence relations of Stirling numbers of these two kinds are collected in, for example,~\cite[Chapter~V]{Comtet-Combinatorics-74}.
\par
Some Stirling number of the second kind $S(n,k)$ is the number of ways of partitioning a set of $n$ elements into $k$ nonempty subsets.
It may be computed by
\begin{equation}\label{S(n-k)-explicit}
S(n,k)=\frac1{k!}\sum_{i=0}^k(-1)^i\binom{k}{i}(k-i)^n
\end{equation}
and may be generated by
\begin{equation}\label{2stirling-gen-funct-exp}
\frac{(e^x-1)^k}{k!}=\sum_{n=k}^\infty S(n,k)\frac{x^n}{n!}, \quad k\in\{0\}\cup\mathbb{N}.
\end{equation}
\par
In this paper, we will derive several ``diagonal'' recurrence relations, construct some inequalities, and find monotonicity related to $S(n,k)$. By the way, we will also pose a conjecture on monotonicity and logarithmic concavity of sequences related to Stirling numbers of the second kind $S(n,k)$.

\section{Several ``diagonal'' recurrence relations of $S(n,k)$}

In~\cite[p.~209]{Comtet-Combinatorics-74}, two ``vertical'' and two ``horizontal'' recurrence relations for $S(n,k)$ were listed. Relative to the words ``vertical'' and ``horizontal'', we may call the following formulas~\eqref{StirlingS2-Bell-eq<2k} and~\eqref{StirlingS2-Bell-eq-simp} the ``diagonal'' recurrence relations for Stirling numbers of the second kind $S(n,k)$.

\begin{thm}\label{StirlingS2-Bell-thm}
For $n>k\ge0$, we have
\begin{align}\label{StirlingS2-Bell-eq<2k}
S(n,k)&=\binom{n}{k}\sum_{\ell=1}^{n-k}(-1)^{\ell}\frac{\binom{k}{\ell}} {\binom{n-k+\ell}{n-k}}\sum_{i=0}^\ell(-1)^{i} \binom{n-k+\ell}{\ell-i}S(n-k+i,i)\\ \label{StirlingS2-Bell-eq-simp}
&=(-1)^n\sum^{k-1}_{i=2k-n}(-1)^{i} \binom{n}{i} \binom{i-1}{2 k-n-1}S(n-i,k-i),
\end{align}
where the conventions that
\begin{equation}\label{binomial-convention-eq}
\binom00=1,\quad \binom{-1}{-1}=1, \quad \text{and}\quad \binom{p}q=0
\end{equation}
for $p\ge0>q$ are adopted.
\end{thm}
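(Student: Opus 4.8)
The plan is to reduce both identities to a single statement about the power series $\phi(x)=\frac{e^x-1}{x}=\sum_{m\ge0}\frac{x^m}{(m+1)!}$. Writing $d=n-k\ge1$ and factoring $(e^x-1)^k=x^k\phi(x)^k$ inside the generating function \eqref{2stirling-gen-funct-exp}, one reads off the key reduction formula
\begin{equation*}
S(k+d,k)=\frac{(k+d)!}{k!}\,[x^{k+d}](e^x-1)^k=\binom{n}{k}\,d!\,[x^d]\phi(x)^k,
\end{equation*}
where $\binom{n}{n-k}=\binom nk$. Setting $\psi=\phi-1=\sum_{m\ge1}\frac{x^m}{(m+1)!}=O(x)$, so that $[x^d]\psi^j=0$ for $j>d$, everything that follows is bookkeeping on the coefficients $[x^d]\psi^j$ (which are, up to a factor, the Bell polynomials $B_{d,j}\bigl(\tfrac12,\tfrac13,\ldots\bigr)$ attached to the Fa\`a di Bruno formula).

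For \eqref{StirlingS2-Bell-eq<2k} I would first substitute $S(n-k+i,i)=\binom{d+i}{d}\,d!\,[x^d]\phi^i$ into the inner sum and apply the subset-of-a-subset identity $\binom{d+\ell}{d+i}\binom{d+i}{d}=\binom{d+\ell}{d}\binom{\ell}{i}$ together with the binomial theorem $\sum_i(-1)^i\binom{\ell}{i}\phi^i=(1-\phi)^\ell=(-1)^\ell\psi^\ell$. This collapses the inner sum to $\binom{d+\ell}{d}\,d!\,(-1)^\ell[x^d]\psi^\ell$; the factor $(-1)^\ell$ cancels the outer sign and $\binom{d+\ell}{d}=\binom{n-k+\ell}{n-k}$ cancels the denominator. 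What survives is $\binom nk\,d!\sum_{\ell\ge1}\binom k\ell[x^d]\psi^\ell=\binom nk\,d!\,[x^d](1+\psi)^k=\binom nk\,d!\,[x^d]\phi^k$, which is exactly $S(n,k)$ by the reduction formula.

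For \eqref{StirlingS2-Bell-eq-simp} I would reindex by $j=k-i$, so that $i$ running from $2k-n$ to $k-1$ corresponds to $j$ running from $1$ to $d$, and rewrite $(-1)^n\binom ni\binom{i-1}{2k-n-1}$ using $\binom{k+d}{d+j}\binom{d+j}{d}=\binom{k+d}{d}\binom kj$ and $\binom{k-j-1}{k-d-1}=\binom{k-j-1}{d-j}$. After inserting $S(d+j,j)=\binom{d+j}{d}\,d!\,[x^d]\phi^j$, proving the identity reduces to the coefficient equality $[x^d]\phi^k=[x^d]\sum_{j=1}^d(-1)^{d-j}\binom kj\binom{k-j-1}{d-j}\phi^j$. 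Expanding both sides in powers of $\psi$ and matching the coefficient of each $[x^d]\psi^m$ reduces this in turn to the purely binomial identity $\sum_{p\ge0}(-1)^{D-p}\binom Kp\binom{K-p-1}{D-p}=1$, which I would prove by summing the generating function $\sum_D\bigl(\cdots\bigr)y^D=(1-y)^{K-1}\bigl(1+\tfrac{y}{1-y}\bigr)^K=(1-y)^{-1}$.

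I expect the main obstacle to be not any single step above but the careful handling of the boundary conventions \eqref{binomial-convention-eq}. When $n\ge2k$ the lower index $2k-n$ is nonpositive, the sum in \eqref{StirlingS2-Bell-eq-simp} formally runs over $i<0$ and through the degenerate case $\binom{-1}{-1}=1$, and several binomial rewrites above silently assume nonnegative upper indices. One must check that the vanishing prescribed by $\binom pq=0$ for $p\ge0>q$ makes the reindexing $j=k-i$, the subset-of-a-subset collapses, and the truncations at $\ell=d$ and $j=d$ all legitimate; this verification, rather than the algebra, is where I would spend the most care.
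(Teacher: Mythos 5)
Your proposal is correct, and it shares its backbone with the paper's proof --- both rest on the coefficient-extraction identity $S(n,k)=\binom nk (n-k)!\,[x^{n-k}]\phi(x)^k$ with $\phi(x)=\frac{e^x-1}{x}$ (the paper's formula~\eqref{StirlingS2-IDL-eq}) and on expanding $\phi^k=(1+\psi)^k$ in powers of $\psi=\phi-1$ --- but you implement the two halves quite differently. For~\eqref{StirlingS2-Bell-eq<2k} the paper computes $\lim_{x\to0}\frac{\td^{n-k}}{\td x^{n-k}}\bigl(\int_1^e u^{x-1}\td u\bigr)^k$ via the Fa\`a di Bruno formula and then imports the cited special value~\eqref{B-S-frac-value} of $\bell_{n,k}\bigl(\frac12,\frac13,\dotsc\bigr)$; your binomial-theorem collapse $\sum_i(-1)^i\binom{\ell}{i}\phi^i=(-1)^\ell\psi^\ell$ is exactly that special value re-derived on the spot, so your route is more elementary and self-contained (no Bell polynomials, no external reference), at the cost of proving the identity by verification of the right-hand side rather than by a forward derivation. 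For~\eqref{StirlingS2-Bell-eq-simp} the divergence is more substantial: the paper obtains it \emph{from}~\eqref{StirlingS2-Bell-eq<2k} by interchanging the two sums and evaluating the inner alternating partial sum $\sum_{\ell}(-1)^\ell\binom{i}{\ell}=(-1)^L\binom{i-1}{L}$, whereas you prove it independently by reducing to the identity $\sum_{p=0}^{D}(-1)^{D-p}\binom Kp\binom{K-p-1}{D-p}=1$ via the generating function $(1-y)^{K-1}\bigl(1+\frac{y}{1-y}\bigr)^K=(1-y)^{-1}$; both are sound, the paper's being shorter because it reuses the first formula and yours having the merit of not propagating any error from it. One caution on your own flagged worry: in your binomial identity you must keep the summation capped at $p\le D$ (equivalently $j\le d$, which is how the terms actually arise from~\eqref{StirlingS2-Bell-eq-simp} after the reindexing $j=k-i$); if you let $p$ run past $D$ the convention $\binom{-1}{-1}=1$ injects a spurious term and the identity fails, so state the cap explicitly rather than relying on the vanishing conventions to kill those terms.
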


\begin{proof}
The equation~\eqref{2stirling-gen-funct-exp} may be rearranged as
\begin{equation}\label{2stirling-gen-rew}
\biggl(\frac{e^x-1}x\biggr)^k=\sum_{n=0}^\infty \frac{S(n+k,k)}{\binom{n+k}{k}}\frac{x^n}{n!}, \quad k\in\{0\}\cup\mathbb{N}.
\end{equation}
Consequently, as coefficients of the power series expansion of the function $\bigl(\frac{e^x-1}x\bigr)^k$,  \begin{equation*}
\frac{S(n+k,k)}{\binom{n+k}{k}}=\lim_{x\to0}\frac{\td^n}{\td x^n} \biggl[\biggl(\frac{e^x-1}x\biggr)^k\biggr],
\end{equation*}
that is, for $n\ge k\ge0$,
\begin{equation}\label{StirlingS2-IDL-eq}
S(n,k)=\binom{n}{k}\lim_{x\to0}\frac{\td^{n-k}}{\td x^{n-k}}\biggl[\biggl(\int_1^eu^{x-1}\td u\biggr)^k\biggr].
\end{equation}
\par
In combinatorics, Bell polynomials of the second kind, or say, the partial Bell polynomials, denoted by $\bell_{n,k}(x_1,x_2,\dotsc,x_{n-k+1})$, may be defined by
\begin{equation}\label{}
\bell_{n,k}(x_1,x_2,\dotsc,x_{n-k+1})=\sum_{\substack{1\le i\le n,\ell_i\in\{0\}\cup\mathbb{N}\\ \sum_{i=1}^ni\ell_i=n\\
\sum_{i=1}^n\ell_i=k}}\frac{n!}{\prod_{i=1}^{n-k+1}\ell_i!} \prod_{i=1}^{n-k+1}\biggl(\frac{x_i}{i!}\biggr)^{\ell_i}
\end{equation}
for $n\ge k\ge0$, and the well-known Fa\`a di Bruno formula may be described in terms of Bell polynomials of the second kind $\bell_{n,k}(x_1,x_2,\dotsc,x_{n-k+1})$ by
\begin{equation}\label{Bruno-Bell-Polynomial}
\frac{\td^n}{\td x^n}f\circ g(x)=\sum_{k=1}^nf^{(k)}(g(x)) \bell_{n,k}\bigl(g'(x),g''(x),\dotsc,g^{(n-k+1)}(x)\bigr).
\end{equation}
See~\cite[p.~134, Theorem~A]{Comtet-Combinatorics-74} and~\cite[p.~139, Theorem~C]{Comtet-Combinatorics-74}.
In~\cite[Theorem~1]{Guo-Qi-JANT-Bernoulli.tex} and~\cite[Example~4.2]{Zhang-Yang-Oxford-Taiwan-12}, it was derived that
\begin{equation}\label{B-S-frac-value}
\bell_{n,k}\biggl(\frac12, \frac13,\dotsc,\frac1{n-k+2}\biggr)
=\frac{n!}{(n+k)!}\sum_{i=0}^k(-1)^{k-i}\binom{n+k}{k-i}S(n+i,i).
\end{equation}
See also~\cite{Special-Bell2Euler.tex} and closely related references therein.
Consequently, we may obtain the following conclusions:
\begin{enumerate}
\item
When $1\le m\le k$,
\begin{multline}\label{m-le-k-eq}\raisetag{9em}
\frac{\td^m}{\td x^m}\biggl[\biggl(\int_1^eu^{x-1}\td u\biggr)^k\biggr]
=\sum_{\ell=1}^m\frac{k!}{(k-\ell)!}\biggl(\int_1^eu^{x-1}\td u\biggr)^{k-\ell}\\
\begin{aligned}
&\quad\times\bell_{m,\ell}\biggl(\int_1^eu^{x-1}\ln u\td u,\int_1^eu^{x-1}(\ln u)^2\td u,\dotsc, \int_1^eu^{x-1}(\ln u)^{m-\ell+1}\td u\biggr)\\
&\to\sum_{\ell=1}^m\frac{k!}{(k-\ell)!}\bell_{m,\ell}\biggl(\frac12,\frac13,\dotsc, \frac1{m-\ell+2}\biggr), \quad x\to0\\
&=\sum_{\ell=1}^m\frac{\binom{k}{\ell}}{\binom{m+\ell}{m}} \sum_{i=0}^\ell(-1)^{i}\binom{m+\ell}{i}S(m+\ell-i,\ell-i)\\
&=\sum_{\ell=1}^m(-1)^{\ell}\frac{\binom{k}{\ell}}{\binom{m+\ell}{m}}\sum_{i=0}^\ell(-1)^{i} \binom{m+\ell}{m+i}S(m+i,i);
\end{aligned}
\end{multline}
\item
Similarly, when $m>k$, we have
\begin{equation}\label{m>k-eq}
\frac{\td^m}{\td x^m}\biggl[\biggl(\int_1^eu^{x-1}\td u\biggr)^k\biggr]
\to\sum_{\ell=1}^k(-1)^{\ell}\frac{\binom{k}{\ell}}{\binom{m+\ell}{m}}\sum_{i=0}^\ell(-1)^{i} \binom{m+\ell}{m+i}S(m+i,i)
\end{equation}
as $x\to0$.
\end{enumerate}
Since the convention that $\binom{k}{m}=0$ for $m>k$, the equation~\eqref{m-le-k-eq} holds for all $m\ge1$ and includes~\eqref{m>k-eq}. Substituting~\eqref{m-le-k-eq} into~\eqref{StirlingS2-IDL-eq} produces
\begin{equation*}
S(n,k)=\binom{n}{k}\sum_{\ell=1}^{n-k}(-1)^{\ell}\frac{\binom{k}{\ell}} {\binom{n-k+\ell}{n-k}}\sum_{i=0}^\ell(-1)^{i} \binom{n-k+\ell}{n-k+i}S(n-k+i,i)
\end{equation*}
for $n-k\ge1$. The formula~\eqref{StirlingS2-Bell-eq<2k} follows.
\par
Interchanging two sums in~\eqref{StirlingS2-Bell-eq<2k} and then computing the inner sum result in
\begin{align*}
S(n,k)&=\binom{n}{k}\sum_{i=1}^{n-k}(-1)^{i} \Biggl[\sum_{\ell=i}^{n-k}(-1)^{\ell}\frac{\binom{k}{\ell}} {\binom{n-k+\ell}{n-k}}\binom{n-k+\ell}{\ell-i}\Biggr] S(n-k+i,i)\\
&=\sum_{i=1}^{n-k}(-1)^{i}\binom{n}{k-i} \Biggl[\sum_{\ell=i}^{n-k}(-1)^{\ell} \binom{k-i}{k-\ell}\Biggr] S(n-k+i,i)\\
&=
\begin{cases}\displaystyle
\sum_{i=1}^{n-k}(-1)^{i}\binom{n}{k-i} \Biggl[\sum_{\ell=i}^{n-k}(-1)^{\ell} \binom{k-i}{k-\ell}\Biggr] S(n-k+i,i), & k<n\le 2k\\\displaystyle
\sum_{i=1}^k(-1)^{i}\binom{n}{k-i} \Biggl[\sum_{\ell=i}^{k}(-1)^{\ell} \binom{k-i}{k-\ell}\Biggr] S(n-k+i,i), & n>2k
\end{cases}\\
&=
\begin{cases}\displaystyle
\sum^{k-1}_{i=2k-n}\binom{n}{i} \Biggl[\sum^{i-(2k-n)}_{\ell=0}(-1)^{\ell} \binom{i}{\ell}\Biggr] S(n-i,k-i), & k<n\le 2k\\\displaystyle
\sum_{i=0}^{k-1}\binom{n}{i}\Biggl[\sum_{\ell=0}^{i}(-1)^\ell\binom{i}{\ell}\Biggr] S(n-i,k-i), & n>2k
\end{cases}\\
&=\sum^{k-1}_{i=2k-n}\binom{n}{i} \Biggl[\sum^{i-(2k-n)}_{\ell=0}(-1)^{\ell} \binom{i}{\ell}\Biggr] S(n-i,k-i)\\
&=(-1)^{n-2k}\sum^{k-1}_{i=2k-n}(-1)^{i} \binom{n}{i} \binom{i-1}{i-(2 k-n)}S(n-i,k-i).
\end{align*}
The formula~\eqref{StirlingS2-Bell-eq-simp} follows.
The proof of Theorem~\ref{StirlingS2-Bell-thm} is complete.
\end{proof}

\begin{rem}
In~\cite[Theorem~1.1]{notes-Stirl-No-JNT-rev.tex}, a ``diagonal'' recurrence relation
\begin{equation}\label{s(n-k)=s(n-k)-id}
s(n,k)=\sum_{m=1}^{n}\sum_{\ell=k-m}^{k-1}(-1)^{k+m-\ell} \binom{n}{\ell}\binom{\ell}{k-m} s(n-\ell,k-\ell)
\end{equation}
for $n\ge k\ge1$ was discovered for Stirling numbers of the first kind $s(n,k)$ which may be generated by
\begin{equation} \label{gen-funct-3}
\frac{[\ln(1+x)]^k}{k!}=\sum_{n=k}^\infty s(n,k)\frac{x^n}{n!},\quad |x|<1.
\end{equation}
\par
As done in the derivation of~\eqref{StirlingS2-Bell-eq-simp}, we may also interchange two sums in~\eqref{StirlingS2-Bell-eq<2k} and compute the inner sum as follows:
\begin{align*}
s(n,k)&=(-1)^k\sum_{\ell=k-n}^{k-1}(-1)^\ell\binom{n}{\ell}\Biggl[\sum_{m=k-\ell}^n(-1)^{m} \binom{\ell}{k-m}\Biggr] s(n-\ell,k-\ell)\\
&=(-1)^{n-k}\sum_{\ell=k-n}^{k-1}(-1)^\ell\binom{n}{\ell} \binom{\ell-1}{k-n-1}s(n-\ell,k-\ell), \quad n\ge k\ge1,
\end{align*}
that is,
\begin{equation}\label{1stirling-diagonal-eq}
s(n,k)=(-1)^{n-k}\sum_{\ell=0}^{k-1}(-1)^\ell\binom{n}{\ell} \binom{\ell-1}{k-n-1}s(n-\ell,k-\ell), \quad n\ge k\ge1,
\end{equation}
where the conventions listed in~\eqref{binomial-convention-eq} are also adopted. The recurrence relation~\eqref{1stirling-diagonal-eq} may also be called as a ``diagonal'' recurrence relation for Stirling numbers of the first kind $s(n,k)$.
\par
The relations~\eqref{s(n-k)=s(n-k)-id} and~\eqref{1stirling-diagonal-eq} are different from two ``vertical'' and two ``horizontal'' recurrence relations collected in~\cite[p.~215]{Comtet-Combinatorics-74}.
\end{rem}

\section{Inequalities and monotonicity related to $S(n,k)$}

After establishing and discussing ``diagonal'' recurrence relations for Stirling numbers of the fist and second kinds $S(n,k)$ and $s(n,k)$, we now construct and deduce, with the help of the formula~\ref{StirlingS2-IDL-eq} and in light of properties of absolutely monotonic functions, some inequalities and monotonicity related to Stirling numbers of the second kind $S(n,k)$.

\begin{thm}\label{S(n-k)-matrix-thm}
Let $m\ge1$ be a positive integer and let $|a_{ij}|_m$ denote a determinant of order $m$ with elements $a_{ij}$.
\begin{enumerate}
\item
If $a_i$ for $1\le i\le m$ are non-negative integers, then
\begin{equation}
\Biggl|\frac{S(a_i+a_j+k,k)}{\binom{a_i+a_j+k}{k}}\Biggr|_m \ge0
\end{equation}
and
\begin{equation}
\Biggl|(-1)^{a_i+a_j}\frac{S(a_i+a_j+k,k)}{\binom{a_i+a_j+k}{k}}\Biggr|_m\ge0
\end{equation}
hold true for all given $k\in\mathbb{N}$.
\item
Let $q=(q_1,q_2,\dotsc,q_n)$ be a real $n$-tuple of non-negative integers and let $a=(a_1,a_2,\dotsc,a_n)$ and $b=(b_1,b_2,\dots,b_n)$ be non-increasing $n$-tuples of non-negative integers such that $a\succeq_q b$, that is,
\begin{equation*}
\sum_{i=1}^kq_ia_i\ge\sum_{i=1}^kq_ib_i
\end{equation*}
for $1\le k\le n-1$ and
\begin{equation*}
\sum_{i=1}^nq_ia_i=\sum_{i=1}^nq_ib_i.
\end{equation*}
Then the inequality
\begin{equation}\label{S(n-k)-prod-ineq}
\prod_{i=1}^n\Biggl[\frac{S(a_i+k,k)}{\binom{a_i+k}{k}}\Biggr]^{q_i}
\ge\prod_{i=1}^n\Biggl[\frac{S(b_i+k,k)}{\binom{b_i+k}{k}}\Biggr]^{q_i}
\end{equation}
hods true for all given $k\in\mathbb{N}$.
\end{enumerate}
\end{thm}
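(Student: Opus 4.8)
The plan is to read the key quantity $\frac{S(n+k,k)}{\binom{n+k}{k}}$ through the integral representation~\eqref{StirlingS2-IDL-eq}, which exhibits it as the value at $x=0$ of the $(n)$th derivative of $g_k(x)=\bigl(\int_1^e u^{x-1}\td u\bigr)^k$, and then to invoke the theory of absolutely monotonic functions. First I would verify that $g_k$ is absolutely monotonic on the whole real line. Differentiating under the integral sign gives
\[
\frac{\td^j}{\td x^j}\int_1^e u^{x-1}\td u=\int_1^e u^{x-1}(\ln u)^j\td u\ge0
\]
for every $j\ge0$ and every real $x$, since $u^{x-1}>0$ and $\ln u\ge0$ on $[1,e]$. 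Hence the base function $h(x)=\int_1^e u^{x-1}\td u$ is absolutely monotonic, and because products of absolutely monotonic functions are again absolutely monotonic (Leibniz rule), so is $g_k=h^k$.

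Next I would pass to a moment representation, which is the precise ``property of absolutely monotonic functions'' that does the work. The substitution $u=e^v$ turns $h$ into $\int_0^1 e^{vx}\td v$, whence
\[
g_k(x)=\int_{[0,1]^k}e^{(v_1+\dotsb+v_k)x}\td v=\int_0^k e^{sx}\td\mu_k(s),
\]
where $\mu_k$ is the nonnegative (probability) measure obtained as the law of a sum of $k$ independent uniform variables on $[0,1]$. Differentiating $n$ times and letting $x\to0$ then yields the crucial identity $\frac{S(n+k,k)}{\binom{n+k}{k}}=\int_0^k s^n\td\mu_k(s)$, exhibiting the left-hand side as the $(n)$th moment of $\mu_k$. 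Part~(1) follows from the positive semidefiniteness of moment (Hankel) matrices: for all reals $v_1,\dotsc,v_m$,
\[
\sum_{i,j}v_iv_j\frac{S(a_i+a_j+k,k)}{\binom{a_i+a_j+k}{k}}=\int_0^k\Bigl(\sum_i v_i s^{a_i}\Bigr)^2\td\mu_k(s)\ge0,
\]
so the first determinant is nonnegative; for the second I would write $(-1)^{a_i+a_j}s^{a_i+a_j}=(-s)^{a_i}(-s)^{a_j}$, which is again a Gram matrix in $L^2(\mu_k)$, or equivalently note that the signed matrix is the congruence $DMD$ with $D=\operatorname{diag}\bigl((-1)^{a_1},\dotsc,(-1)^{a_m}\bigr)$, so its determinant equals that of the unsigned one.

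For part~(2) I would exploit logarithmic convexity of the continuous interpolant. Setting $\phi(x)=\int_0^k s^x\td\mu_k(s)$ for $x\ge0$, Hölder's inequality gives $\phi\bigl(\lambda x+(1-\lambda)y\bigr)\le\phi(x)^\lambda\phi(y)^{1-\lambda}$, so $\log\phi$ is convex, while $\phi(a_i)=\frac{S(a_i+k,k)}{\binom{a_i+k}{k}}$ at the integer arguments. Taking logarithms, the desired inequality~\eqref{S(n-k)-prod-ineq} becomes $\sum_i q_i\log\phi(a_i)\ge\sum_i q_i\log\phi(b_i)$, which is exactly the weighted majorization (Fuchs-type) inequality for the convex function $\log\phi$, the nonincreasing tuples $a\succeq_q b$, and the nonnegative weights $q_i$.

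The step I expect to be the main obstacle is bookkeeping rather than analysis: justifying that the Leibniz-rule estimate preserves absolute monotonicity of the $k$-fold product uniformly enough to take the limit $x\to0$ inside~\eqref{StirlingS2-IDL-eq}, and then matching the moment-sequence consequences to the precise statements quoted about absolutely monotonic functions and to the exact hypotheses of the weighted majorization inequality — in particular checking that the ordering of $a$ and $b$ together with the sign conditions on the $q_i$ are those demanded by Fuchs' theorem. Once the moment representation is in hand, the Hankel positivity in part~(1) and the log-convexity driving part~(2) are entirely standard.
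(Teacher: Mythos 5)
Your proposal is correct, and its skeleton coincides with the paper's: both read $\frac{S(n+k,k)}{\binom{n+k}{k}}$ as the $n$th derivative at $0$ of $H_k(x)=\bigl(\frac{e^x-1}{x}\bigr)^k=\bigl(\int_1^e u^{x-1}\td u\bigr)^k$ and both rest on absolute monotonicity of this function. Where you genuinely diverge is in how the two inequalities are then obtained. The paper treats them as black boxes: it quotes the general theorems of Mitrinovi\'c--Pe\v{c}ari\'c and Pe\v{c}ari\'c--Fink asserting that for any absolutely monotonic $f$ the determinants $\bigl|f^{(a_i+a_j)}(x)\bigr|_m$ and $\bigl|(-1)^{a_i+a_j}f^{(a_i+a_j)}(x)\bigr|_m$ are nonnegative and that $\prod\bigl[f^{(a_i)}(x)\bigr]^{q_i}\ge\prod\bigl[f^{(b_i)}(x)\bigr]^{q_i}$ under $a\succeq_q b$, applies them to $H_k$, and lets $x\to0$. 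You instead make the argument self-contained by pushing the substitution $u=e^v$ one step further to the explicit moment representation $H_k(x)=\int_0^k e^{sx}\td\mu_k(s)$ with $\mu_k$ the law of a sum of $k$ independent uniforms, so that $\frac{S(n+k,k)}{\binom{n+k}{k}}=\int_0^k s^n\td\mu_k(s)$; part (1) then becomes Hankel (Gram) positivity in $L^2(\mu_k)$ together with the congruence $DMD$ trick for the signed determinant, and part (2) becomes H\"older-induced log-convexity of $x\mapsto\int s^x\td\mu_k(s)$ plus Fuchs' weighted majorization inequality. Your version buys a proof from first principles (and in fact proves the cited general facts in this instance), at the cost of importing Fuchs' theorem; the paper's version is shorter but leans entirely on the quoted literature. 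The obstacle you flag about passing to the limit $x\to0$ is not a real one: with the compactly supported measure $\mu_k$, differentiation under the integral sign is immediate and one can simply evaluate at $x=0$.
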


\begin{proof}
A function $f$ is said to be absolutely monotonic on an interval $I$ if $f$ has derivatives of all
orders on $I$ and $0\le f^{(n)}(x)<\infty$ for $x\in I$ and $n\ge0$. See~\cite{absolute-mon-simp.tex} and Chapter~XIII in~\cite{mpf-1993}.
In~\cite{two-place} and~\cite[p.~367]{mpf-1993}, it was recited that if $f$ is an absolutely monotonic function on $[0,\infty)$, then
\begin{equation}\label{matrix-eq-1}
\bigl|f^{(a_i+a_j)}(x)\bigr|_m\ge0
\end{equation}
and
\begin{equation}\label{matrix-eq-2}
\bigl|(-1)^{a_i+a_j}f^{(a_i+a_j)}(x)\bigr|_m\ge0.
\end{equation}
In~\cite[p.~368]{mpf-1993} and~\cite[p.~429]{Remarks-on-Fink}, it was stated that if $f$ is an absolutely monotonic function on $[0,\infty)$ and $a\succeq_q b$, then
\begin{equation}\label{matrix-eq-3}
\prod_{i=1}^n\bigl[f^{(a_i)}(x)\bigr]^{q_i}
\ge\prod_{i=1}^n\bigl[f^{(b_i)}(x)\bigr]^{q_i}.
\end{equation}
\par
It is easy to see that
\begin{equation*}
\biggl(\frac{e^x-1}x\biggr)^{(m)}=\int_1^eu^{x-1}(\ln u)^m\td u, \quad m\ge0,
\end{equation*}
see~\cite{1st-Sirling-Number-2012.tex, Qi-Springer-2012-Srivastava.tex} and plenty of closely-related references cited therein.
This means that $\frac{e^x-1}x=\int_1^eu^{x-1}\td u$ is absolutely monotonic on $(-\infty,\infty)$. As a result, the function
\begin{equation}
H_k(x)=\biggl(\frac{e^x-1}x\biggr)^k=\biggl(\int_1^eu^{x-1}\td u\biggr)^k, \quad k\in\mathbb{N}
\end{equation}
is also absolutely monotonic on $(-\infty,\infty)$ and, by the formula~\ref{StirlingS2-IDL-eq},
\begin{equation}
\lim_{x\to0}H_k^{(\ell)}(x)=\frac{S(\ell+k,k)}{\binom{\ell+k}{k}}, \quad \ell\in\{0\}\cup\mathbb{N}.
\end{equation}
Making use of inequalities~\eqref{matrix-eq-1}, \eqref{matrix-eq-2}, and~\eqref{matrix-eq-3} and taking the limit $x\to0$ find that
\begin{align*}
0\le\bigl|H_k^{(a_i+a_j)}(x)\bigr|_m
&\to\Biggl|\frac{S(a_i+a_j+k,k)}{\binom{a_i+a_j+k}{k}}\Biggr|_m,\\
0\le\bigl|(-1)^{a_i+a_j}H_k^{(a_i+a_j)}(x)\bigr|_m
&\to\Biggl|(-1)^{a_i+a_j}\frac{S(a_i+a_j+k,k)}{\binom{a_i+a_j+k}{k}}\Biggr|_m,
\end{align*}
and
\begin{equation*}
\prod_{i=1}^n\Biggl[\frac{S(a_i+k,k)}{\binom{a_i+k}{k}}\Biggr]^{q_i}
\leftarrow\prod_{i=1}^n\bigl[H_k^{(a_i)}(x)\bigr]^{q_i}
\ge\prod_{i=1}^n\bigl[H_k^{(b_i)}(x)\bigr]^{q_i}
\to\prod_{i=1}^n\Biggl[\frac{S(b_i+k,k)}{\binom{b_i+k}{k}}\Biggr]^{q_i}.
\end{equation*}
The proof of Theorem~\ref{S(n-k)-matrix-thm} is complete.
\end{proof}

\begin{cor}\label{S(n-k)-convex-cor}
For any given $k\in\mathbb{N}$, the infinite sequence
\begin{equation}\label{S-frac-binom-seq}
\Biggl\{\frac{S(n+k,k)}{\binom{n+k}{k}}\Biggr\}_{n\ge0}
\end{equation}
is logarithmically convex with respect to $n$.
\end{cor}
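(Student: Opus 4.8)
The plan is to obtain logarithmic convexity as an immediate special case of the majorization inequality in part~(2) of Theorem~\ref{S(n-k)-matrix-thm}. Abbreviate the terms of the sequence~\eqref{S-frac-binom-seq} by $a_n=\frac{S(n+k,k)}{\binom{n+k}{k}}$. By definition, logarithmic convexity in $n$ means $a_n^2\le a_{n-1}a_{n+1}$ for every $n\ge1$, so it suffices to derive this one two-term inequality for each fixed $n\ge1$ and each given $k$.

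First I would apply the two-term case of inequality~\eqref{S(n-k)-prod-ineq}: take the tuples in Theorem~\ref{S(n-k)-matrix-thm}(2) to have length two, with weights $q=(1,1)$ and with the non-negative integer tuples $a=(n+1,n-1)$ and $b=(n,n)$, where $n\ge1$ is the index at which convexity is being checked. The hypotheses are then routine to confirm: both $a$ and $b$ consist of non-negative integers because $n-1\ge0$, both are non-increasing since $n+1\ge n-1$ and $n\ge n$, and the weighted majorization $a\succeq_q b$ reduces to the single partial-sum inequality $n+1\ge n$ together with the full-sum equality $(n+1)+(n-1)=n+n$.

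Under these choices, \eqref{S(n-k)-prod-ineq} becomes $a_{n+1}a_{n-1}\ge a_n^2$, which is precisely the claimed logarithmic convexity of~\eqref{S-frac-binom-seq}. I anticipate no genuine obstacle here; the only subtlety is selecting the tuples so that the majorization holds as an equality on the total sum and as a single inequality on the first partial sum, while keeping both entries non-negative. It is worth remarking that trying to deduce the corollary instead from the $2\times2$ determinant inequality~\eqref{matrix-eq-1} of part~(1) would only furnish the midpoint relation $a_pa_q\ge a_{(p+q)/2}^2$ for $p+q$ even, so the majorization formulation in part~(2) is the appropriate tool.
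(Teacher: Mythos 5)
Your proposal is correct and is essentially identical to the paper's own proof: the paper also specializes Theorem~\ref{S(n-k)-matrix-thm}(2) to the two-term case with unit weights and tuples $(\ell+2,\ell)\succeq_q(\ell+1,\ell+1)$, which is your choice $a=(n+1,n-1)$, $b=(n,n)$ after the substitution $\ell=n-1$. The verification of the majorization hypotheses and the resulting inequality $a_{n+1}a_{n-1}\ge a_n^2$ match the paper exactly.
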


\begin{proof}
Letting
\begin{equation*}
n=2,\quad q_1=q_2=1, \quad a_1=\ell+2, \quad a_2=\ell, \quad\text{and}\quad b_1=b_2=\ell+1
\end{equation*}
in the inequality~\eqref{S(n-k)-prod-ineq} leads to
\begin{equation}\label{S(n-k)-convex-ineq}
\frac{S(\ell+k+2,k)}{\binom{\ell+k+2}{k}} \frac{S(\ell+k,k)}{\binom{\ell+k}{k}}
\ge\Biggl[\frac{S(\ell+k+1,k)}{\binom{\ell+k+1}{k}}\Biggr]^2, \quad \ell\ge0.
\end{equation}
As a result, the sequence~\eqref{S-frac-binom-seq} is logarithmically convex.
The proof of Corollary~\ref{S(n-k)-convex-cor} is complete.
\end{proof}

\begin{rem}
The ideas employed in Theorem~\ref{S(n-k)-matrix-thm} and Corollary~\ref{S(n-k)-convex-cor} have also been applied in the articles~\cite{Norlund-No-CM-JNT.tex, 1st-Sirling-Number-2012.tex, Bernoulli2nd-Property.tex}.
\end{rem}

\section{Monotonicity}

After establishing diagonal recurrence relations and constructing inequalities related to Stirling numbers of the second kind $S(n,k)$, we are now in a position to create an infinite sequence in terms of Stirling numbers of the second kind $S(n,k)$ and to prove its increasing monotonicity.

\begin{thm}\label{S(n-k)-increasing-seq-thm}
For any fixed positive integers $n,k$ with $n\ge k\ge2$, let
\begin{equation}\label{frak-S(n-k)}
\mathfrak{S}_1(n,k)=S^2(n,k-1)-S(n,k-2)S(n,k).
\end{equation}
Then the infinite sequence $\{\mathfrak{S}_1(n+m,k+m)\}_{m\ge0}$ is strictly increasing with respect to $m$.
\end{thm}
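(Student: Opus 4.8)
The plan is to prove strict monotonicity by showing that each forward difference of the sequence is positive. I fix $m\ge0$, abbreviate $N=n+m$ and $K=k+m$ so that $N\ge K\ge2$, and set $D=\mathfrak{S}_1(N+1,K+1)-\mathfrak{S}_1(N,K)$; it then suffices to establish $D>0$ for every admissible pair $(N,K)$. The idea is to collapse all of the row-$(N+1)$ Stirling numbers appearing in $\mathfrak{S}_1(N+1,K+1)$ back to row $N$ by repeatedly applying the triangular recurrence $S(N+1,j)=jS(N,j)+S(N,j-1)$, after which $D$ becomes a quadratic form in four consecutive row-$N$ entries whose sign can be read off from log-concavity.

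Concretely, I would set $a=S(N,K+1)$, $b=S(N,K)$, $c=S(N,K-1)$, and $d=S(N,K-2)$, and substitute $S(N+1,K+1)=(K+1)a+b$, $S(N+1,K)=Kb+c$, and $S(N+1,K-1)=(K-1)c+d$ into $\mathfrak{S}_1(N+1,K+1)=S^2(N+1,K)-S(N+1,K-1)S(N+1,K+1)$. Upon expanding, the $c^2$ and $bd$ terms cancel against $\mathfrak{S}_1(N,K)=c^2-bd$, and after collecting the mixed terms the difference simplifies to
\[
D=\bigl[K^2b^2-(K^2-1)ac\bigr]+(K+1)(bc-ad).
\]
This algebraic reduction, though elementary, is the most error-prone part and is where I expect to spend the bulk of the routine effort; I would double-check it against small cases such as $K=2$ (where $d=S(N,0)=0$) and $N=K$ (where $a=0$ and $b=1$).

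The sign of $D$ then follows from the log-concavity of $S(N,\cdot)$ in the lower index. That property yields at once $b^2\ge ac$, whence $K^2b^2-(K^2-1)ac\ge ac\ge0$; moreover this bracket is in fact strictly positive, since if $ac=0$ it reduces to $K^2b^2\ge K^2>0$ (using $b=S(N,K)\ge1$ and $K\ge2$), while if $ac>0$ it is at least $ac>0$. For the second bracket, log-concavity makes the ratios $S(N,j+1)/S(N,j)$ nonincreasing in $j$, so $S(N,K)S(N,K-1)\ge S(N,K+1)S(N,K-2)$, that is, $bc\ge ad$ and hence $(K+1)(bc-ad)\ge0$. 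Adding the two nonnegative pieces, the first strictly positive, gives $D>0$, which is precisely the desired strict increase.

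The conceptual obstacle is thus not the recurrence bookkeeping but securing the two log-concavity-type inequalities $b^2\ge ac$ and $bc\ge ad$, which are not among the results established earlier in this paper. I would invoke the classical fact that the Touchard (Bell) polynomial $\sum_{k}S(N,k)x^{k}$ has only real zeros, so that $\{S(N,k)\}_{k}$ is a P\'olya frequency, in particular log-concave, sequence; the inequality $bc\ge ad$ is then the standard two-point consequence of log-concavity for the index pairs $\{K,K-1\}$ and $\{K+1,K-2\}$, which share the common sum $2K-1$ but differ in spread. A brief check of the boundary conventions of~\eqref{binomial-convention-eq} when $K=2$ or $N=K$ would complete the argument.
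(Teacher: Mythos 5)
Your proposal is correct, and it takes a genuinely different route from the paper. The paper does not compute the forward difference directly: it starts from the log-convexity inequality \eqref{S(n-k)-convex-ineq} (Corollary~\ref{S(n-k)-convex-cor}), uses the triangular recurrence \eqref{Stirling2-recur-eq} to massage that inequality into the upper bound $\mathfrak{S}_1(n,k)\le\frac{k}{(n+1)(n-k+2)}S^2(n+1,k)-S(n,k-1)S(n,k)$, and then shows this bound is at most $\mathfrak{S}_1(n+1,k+1)$ by invoking Sibuya's refined log-concavity inequality \eqref{S-Ratio-ineq-upper}. Your argument bypasses Corollary~\ref{S(n-k)-convex-cor} entirely: applying \eqref{Stirling2-recur-eq} to all row-$(N+1)$ entries, the difference collapses to $D=\bigl[K^2b^2-(K^2-1)ac\bigr]+(K+1)(bc-ad)$ --- I checked this identity and it is exact, with the $c^2$ and $bd$ terms cancelling as you say --- and positivity then needs only the plain log-concavity $b^2\ge ac$ and its two-point consequence $bc\ge ad$, which is a weaker external input than the strict Sibuya inequality the paper relies on (and is indeed classical, via the real-rootedness of the Bell polynomials; alternatively \eqref{S-Ratio-ineq-upper} itself implies it). Your handling of the degenerate cases ($a=0$ when $N=K$, $d=0$ when $K=2$, and $c=S(N,K-1)>0$ always since $K\ge2$) is what makes the first bracket strictly positive and the second nonnegative, so $D>0$ follows cleanly. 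The trade-off: the paper's route reuses its own machinery (the log-convexity of $S(n+k,k)/\binom{n+k}{k}$) and so fits the narrative of the article, while yours is shorter, purely algebraic modulo one standard fact, and avoids the binomial-coefficient bookkeeping altogether.
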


\begin{proof}
It is well known in combinatorics that Stirling numbers of the second kind $S(n,k)$ satisfy $S(0,0)=1$, $S(n,0)=S(0,k)=0$ for $n,k\ge1$, and the ``triangular'' recurrence relation
\begin{equation}\label{Stirling2-recur-eq}
S(n,k)=kS(n-1,k)+S(n-1,k-1)
\end{equation}
for $n\ge k\ge1$. See~\cite[p.~208]{Comtet-Combinatorics-74}. Hence, the inequality~\eqref{S(n-k)-convex-ineq} may be rearranged as
\begin{gather*}
\frac{S(i+k+2,k)}{\binom{i+k+2}{k}} \frac{S(i+k,k)}{\binom{i+k}{k}}
=\frac{S(i+k+1,k-1)+kS(i+k+1,k)}{\binom{i+k+2}{k}} \frac{S(i+k,k)}{\binom{i+k}{k}}\\
=\frac{S(i+k,k-2)+(2k-1)S(i+k,k-1)+k^2S(i+k,k)}{\binom{i+k+2}{k}} \frac{S(i+k,k)}{\binom{i+k}{k}}\\
\ge\Biggl[\frac{S(i+k+1,k)}{\binom{i+k+1}{k}}\Biggr]^2
=\Biggl[\frac{S(i+k,k-1)+kS(i+k,k)}{\binom{i+k+1}{k}}\Biggr]^2.
\end{gather*}
Replacing $i+k$ by $n$ in the above inequality and simplifying give
\begin{equation*}
\frac{S(n,k-2)+(2k-1)S(n,k-1)+k^2S(n,k)}{\binom{n+2}{k}} \frac{S(n,k)}{\binom{n}{k}}
\ge\Biggl[\frac{S(n,k-1)+kS(n,k)}{\binom{n+1}{k}}\Biggr]^2,
\end{equation*}
that is, by the recurrence relation~\eqref{Stirling2-recur-eq},
\begin{gather*}
\begin{aligned}
&\quad\frac{S(n,k-2)S(n,k)}{\binom{n+2}{k}\binom{n}{k}}-\frac{S^2(n,k-1)}{\binom{n+1}{k}^2}\\
&\ge \Biggl[\frac1{\binom{n+1}{k}^2}-\frac1{\binom{n+2}{k}\binom{n}{k}}\Biggr]k^2S^2(n,k)
+\Biggl[\frac{2k}{\binom{n+1}{k}^2} -\frac{2k-1}{\binom{n+2}{k}\binom{n}{k}}\Biggr]S(n,k-1)S(n,k),
\end{aligned}\\
\begin{aligned}
&\quad\frac{S(n,k-2)S(n,k)-S^2(n,k-1)}{\binom{n+2}{k}\binom{n}{k}}
+\Biggl[\frac1{\binom{n+2}{k}\binom{n}{k}}-\frac1{\binom{n+1}{k}^2}\Biggr]S^2(n,k-1)\\
&\ge \Biggl[\frac1{\binom{n+1}{k}^2}-\frac1{\binom{n+2}{k}\binom{n}{k}}\Biggr]k^2S^2(n,k)
+\Biggl[\frac{2k}{\binom{n+1}{k}^2} -\frac{2k-1}{\binom{n+2}{k}\binom{n}{k}}\Biggr]S(n,k-1)S(n,k),
\end{aligned}\\
\begin{aligned}
\frac{\mathfrak{S}_1(n,k)}{\binom{n+2}{k}\binom{n}{k}}
&\le \Biggl[\frac{2k-1}{\binom{n+2}{k}\binom{n}{k}} -\frac{2k}{\binom{n+1}{k}^2}\Biggr]S(n,k-1)S(n,k)\\
&\quad+\Biggl[\frac1{\binom{n+2}{k}\binom{n}{k}}-\frac1{\binom{n+1}{k}^2}\Biggr] \bigl[k^2S^2(n,k)+S^2(n,k-1)\bigr],
\end{aligned}\\
\begin{aligned}
\mathfrak{S}_1(n,k)
&\le \Biggl[2k-1 -2k\frac{\binom{n+2}{k}\binom{n}{k}}{\binom{n+1}{k}^2}\Biggr]S(n,k-1)S(n,k)\\
&\quad+\Biggl[1-\frac{\binom{n+2}{k}\binom{n}{k}}{\binom{n+1}{k}^2}\Biggr] \bigl[k^2S^2(n,k)+S^2(n,k-1)\bigr],
\end{aligned}\\
\begin{aligned}
\mathfrak{S}_1(n,k)&\le\Biggl[1-\frac{\binom{n+2}{k}\binom{n}{k}} {\binom{n+1}{k}^2}\Biggr]\bigl[k^2S^2(n,k)
+2kS(n,k-1)S(n,k)+S^2(n,k-1)\bigr]\\
&\quad-S(n,k-1)S(n,k)\\
&=\Biggl[1-\frac{\binom{n+2}{k}\binom{n}{k}}{\binom{n+1}{k}^2}\Biggr] \bigl[kS(n,k)+S(n,k-1)\bigr]^2-S(n,k-1)S(n,k)\\
&=\frac{k}{(n+1) (n-k+2)}S^2(n+1,k)-S(n,k-1)S(n,k).
\end{aligned}
\end{gather*}
In order to prove the increasing monotonicity of the infinite sequence $\{\mathfrak{S}_1(n+m,k+m)\}_{m\ge0}$, it suffices to show
\begin{equation*}
\frac{kS^2(n+1,k)}{(n+1) (n-k+2)}-S(n,k-1)S(n,k)\le S^2(n+1,k)-S(n+1,k-1)S(n+1,k+1)
\end{equation*}
which may be reformulated as
\begin{equation}\label{suffice-eq-S(n-k)}
\frac{S(n+1,k-1)S(n+1,k+1)}{S^2(n+1,k)}-\frac{S(n,k-1)S(n,k)}{S^2(n+1,k)}
\le \frac{(n+2)(n-k+1)}{(n+1) (n-k+2)}.
\end{equation}
In~\cite[p.~698]{Sibuya-AISM-1988}, it was proved by the recurrence relation in~\eqref{Stirling2-recur-eq} and by induction that
\begin{equation}
(m-1)(n-m)S^2(n,m)>(m+1)(n-m+1)S(n,m-1)S(n,m+1)
\end{equation}
for $2\le m\le n-1$, which may be rewritten as
\begin{equation}\label{S-Ratio-ineq-upper}
\frac{S(n+1,k-1)S(n+1,k+1)}{S^2(n+1,k)}<\frac{(k-1)(n-k+1)}{(k+1)(n-k+2)}
\end{equation}
for $2\le k\le n$. Therefore, in order to show~\eqref{suffice-eq-S(n-k)}, it is sufficient to verify
\begin{align*}
\frac{S(n,k-1)S(n,k)}{S^2(n+1,k)}&\ge \frac{(k-1)(n-k+1)}{(k+1)(n-k+2)} -\frac{(n+2)(n-k+1)}{(n+1)(n-k+2)}\\
&=-\frac{(n-k+1) (2 n+k+3)}{(k+1) (n+1) (n-k+2)}
\end{align*}
which is obvious. The proof of Theorem~\ref{S(n-k)-increasing-seq-thm} is complete.
\end{proof}

\section{A conjecture}

Finally, motivated by Theorem~\ref{S(n-k)-increasing-seq-thm}, we pose the following conjecture.

\begin{conj}
For $k,\ell,n\in\mathbb{N}$, let $\mathfrak{S}_{1}(n,k)$ is defined by~\eqref{frak-S(n-k)} and let
\begin{equation}
\mathfrak{S}_{\ell+1}(n,k)=\mathfrak{S}_{\ell}^2(n,k-1) -\mathfrak{S}_{\ell}(n,k-2)\mathfrak{S}_{\ell}(n,k)
\end{equation}
and
\begin{equation}
\mathcal{S}_{\ell}(n,k)=\frac{\mathfrak{S}_{\ell+1}(n,k)}{\mathfrak{S}_{\ell}(n,k)}
\end{equation}
for $n\ge k\ge\ell+2$.
Then the following claims are valid.
\begin{enumerate}
\item
For fixed integers $\ell\in\mathbb{N}$ and $n\ge\ell+3$, the finite sequence $\{\mathfrak{S}_{\ell}(n,k)\}_{\ell+1\le k\le n}$ is logarithmically concave with respect to $k$.
\item
For fixed integers $n\ge k\ge3$, the finite sequence $\{\mathfrak{S}_{\ell}(n,k)\}_{1\le \ell\le k-1}$ is strictly increasing with respect to $\ell$.
\item
For fixed integers $\ell\in\mathbb{N}$ and $n\ge k\ge\ell+1$, the infinite sequence $\{\mathfrak{S}_{\ell}(n+m,k+m)\}_{m\ge0}$ is strictly increasing with respect to $m$.
\item
For fixed integers $\ell\in\mathbb{N}$ and $k\ge\ell+1$, the infinite sequence $\{\mathfrak{S}_{\ell}(n,k)\}_{n\ge k}$ is strictly increasing with respect to $n$.
\item
For fixed integers $n\ge k\ge\ell+2$, the infinite sequence $\{\mathcal{S}_{\ell}(n+m,k+m)\}_{m\ge0}$ is strictly increasing with respect to $m$.
\item
For fixed integers $k\ge\ell+2$, the infinite sequence $\{\mathcal{S}_{\ell}(n,k)\}_{n\ge k}$ is strictly increasing with respect to $n$.
\end{enumerate}
\end{conj}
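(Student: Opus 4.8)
The plan is to split the six assertions into the single positivity statement~(1), on which the very definitions of $\mathcal{S}_{\ell}$ and of the higher iterates $\mathfrak{S}_{\ell+1}$ depend, and the five monotonicity statements (2)--(6). The organizing observation is that, on introducing the operator $(\mathcal{L}a)_i=a_i^2-a_{i-1}a_{i+1}$ acting on a sequence $\{a_i\}$, one has $\mathfrak{S}_{\ell}(n,k)=\bigl(\mathcal{L}^{\ell}a\bigr)_{k-\ell}$ for the row $a_j=S(n,j)$; indeed $\mathfrak{S}_1(n,k)=(\mathcal{L}a)_{k-1}$ directly from~\eqref{frak-S(n-k)}, and the defining recursion for $\mathfrak{S}_{\ell+1}$ iterates $\mathcal{L}$ with a further unit shift in $k$. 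Consequently claim~(1), asserting the log-concavity of $\{\mathfrak{S}_{\ell}(n,k)\}_k$ for every $\ell$, is precisely the statement that each row $\{S(n,k)\}_k$ is \emph{infinitely log-concave}.

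To prove~(1) I would combine two results. First, Harper's theorem that the Touchard polynomial $\sum_k S(n,k)x^k$ has only real nonpositive zeros; second, Br\"and\'en's theorem on iterated sequences, which shows that $\mathcal{L}$ maps the class of polynomials with nonnegative coefficients and only real nonpositive zeros into itself. Together these give that every iterate $\mathcal{L}^{\ell}a$ is again real-rooted, hence log-concave by Newton's inequalities, and in particular nonnegative; this yields $\mathfrak{S}_{\ell}(n,k)\ge0$ for all $\ell$ and is exactly claim~(1). Strictness in the interior of the admissible $k$-range should follow from the strict form of Newton's inequalities, valid once the zeros of the iterates are simple, a property that the real-rootedness-preserving mechanism can be expected to propagate.

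The monotonicity assertions (3), (4), (5), (6) I would attack by induction on $\ell$, the base case $\ell=1$ of~(3) being Theorem~\ref{S(n-k)-increasing-seq-thm} itself. The whole force of that proof comes from two-sided rational control of the ratio $S(n,k-1)S(n,k+1)/S^2(n,k)$, furnished by the Sibuya inequality~\eqref{S-Ratio-ineq-upper} and the triangular recurrence~\eqref{Stirling2-recur-eq}. The natural strategy is thus to seek, for each $\ell$, a \emph{Sibuya-type} estimate confining $\mathfrak{S}_{\ell}(n,k-1)\mathfrak{S}_{\ell}(n,k+1)/\mathfrak{S}_{\ell}(n,k)^2$ between explicit rational functions of $n$ and $k$, and then to rerun the algebra of Theorem~\ref{S(n-k)-increasing-seq-thm} with $S$ replaced by $\mathfrak{S}_{\ell}$, from which (4), (5), (6) would drop out by the same manipulations. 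Claim~(2), that the iterates increase in $\ell$, I would reduce to a \emph{quantitative} sharpening of~(1), namely $\mathfrak{S}_{\ell+1}(n,k)\ge\mathfrak{S}_{\ell}(n,k)$; the rapid growth of the iterated gaps in small numerical examples makes such a lower bound plausible, and I would seek it either from a strengthened Newton inequality for $\mathcal{L}^{\ell}a$ or, in the spirit of Theorem~\ref{S(n-k)-matrix-thm} and Corollary~\ref{S(n-k)-convex-cor}, from an absolutely monotonic master function whose derivatives at the origin encode $\mathfrak{S}_{\ell}$.

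The hard part will be the inductive step underlying (2)--(6). For $\ell\ge2$ the quantities $\mathfrak{S}_{\ell}(n,k)$ admit neither a closed form nor a triangular recurrence analogous to~\eqref{Stirling2-recur-eq}, so there is no ready-made source for the Sibuya-type bounds that power the base case; worse, the operator $\mathcal{L}$ does not transport monotonicity in $n$ or along the diagonal in any formal way, since a difference of products need not inherit the monotonicity of its factors. I therefore expect the induction to demand a genuinely new ingredient---either a recurrence or a determinantal identity for $\mathfrak{S}_{\ell}$, or an analytic total-positivity argument---and I would begin by computing the ratios above for small $\ell$ to pin down the correct rational bounds before attempting a general proof.
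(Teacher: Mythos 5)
The first thing to say is that the paper offers no proof of this statement at all: it is posed explicitly as an open conjecture, so there is no argument of the author's to compare yours against. Your structural observation is correct and is the right way to look at part (1): with $(\mathcal{L}a)_i=a_i^2-a_{i-1}a_{i+1}$ and $a_j=S(n,j)$ one verifies by induction that $\mathfrak{S}_{\ell}(n,k)=\bigl(\mathcal{L}^{\ell}a\bigr)_{k-\ell}$, so log-concavity of $\{\mathfrak{S}_{\ell}(n,k)\}_k$ for every $\ell$ is exactly the infinite log-concavity of the rows of the Stirling triangle. Harper's theorem (real-rootedness of $\sum_kS(n,k)x^k$), Br\"and\'en's theorem that $\mathcal{L}$ preserves the class of polynomials with nonnegative coefficients and only real nonpositive zeros, and Newton's inequalities then combine, by induction on $\ell$, to give nonnegativity and log-concavity of every iterate. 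This genuinely settles claim (1) in the non-strict form in which it is stated (up to checking the boundary indices $k=\ell+1$ and $k=n$, where the padded sequence has zeros and the inequality degenerates to $0\le0$), which is more than the paper does.

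The gap is everything else. For claims (2)--(6) you have not produced a proof but a research plan whose essential ingredient --- a Sibuya-type two-sided rational bound on $\mathfrak{S}_{\ell}(n,k-1)\mathfrak{S}_{\ell}(n,k+1)/\mathfrak{S}_{\ell}^2(n,k)$ for $\ell\ge2$ that would let you rerun the algebra of Theorem~\ref{S(n-k)-increasing-seq-thm} --- you explicitly do not possess; you yourself identify the absence of any recurrence, closed form, or triangular relation for $\mathfrak{S}_{\ell}$ as the obstruction, and the fact that $\mathcal{L}$ does not formally transport monotonicity in $n$ or along the diagonal means even the inductive step is not set up. The reduction of claim (2) to the pointwise bound $\mathfrak{S}_{\ell+1}(n,k)\ge\mathfrak{S}_{\ell}(n,k)$ is a restatement supported only by numerical plausibility, not an argument. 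So the honest assessment is: part (1) is essentially done modulo careful citation and boundary checks, and parts (2)--(6) remain exactly as conjectural as the paper leaves them.
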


\begin{rem}
There are some closely related results in the papers~\cite{ANLY-D-12-1238.tex, GJMA-3310-Guo-Qi.tex, Eight-Identy-More.tex, exp-derivative-sum-Combined.tex, Filomat-36-73-1.tex, Bernoulli-Stirling-4P.tex} on Stirling numbers of the first and second kinds $s(n,k)$ and $S(n,k)$.
\end{rem}

\begin{rem}
This paper is a revised version of the preprint~\cite{SirlingTo-Recurr-Ext.tex}.
\end{rem}


\begin{thebibliography}{99}

\bibitem{Comtet-Combinatorics-74}
L. Comtet, \emph{Advanced Combinatorics: The Art of Finite and Infinite Expansions}, Revised and Enlarged Edition, D. Reidel Publishing Co., Dordrecht and Boston, 1974.

\bibitem{absolute-mon-simp.tex}
B.-N. Guo and F. Qi, \textit{A property of logarithmically absolutely monotonic functions and the logarithmically complete monotonicity of a power-exponential function}, Politehn. Univ. Bucharest Sci. Bull. Ser. A Appl. Math. Phys. \textbf{72} (2010), no.~2, 21\nobreakdash--30.

\bibitem{ANLY-D-12-1238.tex}
B.-N. Guo and F. Qi, \textit{Alternative proofs of a formula for Bernoulli numbers in terms of Stirling numbers}, Analysis (Berlin) \textbf{34} (2014), no.~2, 187\nobreakdash--193; Available online at \url{http://dx.doi.org/10.1515/anly-2012-1238}.

\bibitem{GJMA-3310-Guo-Qi.tex}
B.-N. Guo and F. Qi, \textit{An explicit formula for Bell numbers in terms of Stirling numbers and hypergeometric functions}, Glob. J. Math. Anal. \textbf{2} (2014), no.~4, 243\nobreakdash--248; Available online at \url{http://dx.doi.org/10.14419/gjma.v2i4.3310}.

\bibitem{Guo-Qi-JANT-Bernoulli.tex}
B.-N. Guo and F. Qi, \textit{An explicit formula for Bernoulli numbers in terms of Stirling numbers of the second kind}, J. Anal. Number Theory \textbf{3} (2015), no.~1, 1\nobreakdash--4; Available online at \url{http://dx.doi.org/10.12785/jant/????}.

\bibitem{Eight-Identy-More.tex}
B.-N. Guo and F. Qi, \textit{Explicit formulae for computing Euler polynomials in terms of Stirling numbers of the second kind}, J. Comput. Appl. Math. \textbf{272} (2014), 251\nobreakdash--257; Available online at \url{http://dx.doi.org/10.1016/j.cam.2014.05.018}.

\bibitem{Special-Bell2Euler.tex}
B.-N. Guo and F. Qi, \textit{Explicit formulas for special values of Bell polynomials of the second kind and Euler numbers}, ResearchGate Technical Report, available online at \url{http://dx.doi.org/10.13140/2.1.3794.8808}.

\bibitem{exp-derivative-sum-Combined.tex}
B.-N. Guo and F. Qi, \textit{Some identities and an explicit formula for Bernoulli and Stirling numbers}, J. Comput. Appl. Math. \textbf{255} (2014), 568\nobreakdash--579; Available online at \url{http://dx.doi.org/10.1016/j.cam.2013.06.020}.

\bibitem{two-place}
D. S. Mitrinovi\'c and J. E. Pe\v{c}ari\'c, \textit{On two-place completely monotonic functions}, Anzeiger \"Oster. Akad. Wiss. Math.-Natturwiss. Kl. \textbf{126} (1989), 85\nobreakdash--88.

\bibitem{mpf-1993}
D. S. Mitrinovi\'c, J. E. Pe\v{c}ari\'c, and A. M. Fink, \textit{Classical and New Inequalities in Analysis}, Kluwer Academic Publishers, 1993.

\bibitem{Remarks-on-Fink}
J. E. Pe\v{c}ari\'c, \emph{Remarks on some inequalities of A. M. Fink}, J. Math. Anal. Appl. \textbf{104} (1984), no.~2, 428\nobreakdash--431; Available online at \url{http://dx.doi.org/10.1016/0022-247X(84)90006-4}.

\bibitem{SirlingTo-Recurr-Ext.tex}
F. Qi, \textit{A recurrence formula, some inequalities, and monotonicity related to Stirling numbers of the second kind}, available online at \url{http://arxiv.org/abs/1402.2040}.

\bibitem{notes-Stirl-No-JNT-rev.tex}
F. Qi, \textit{A recurrence formula for the first kind Stirling numbers}, available online at \url{http://arxiv.org/abs/1310.5920}.

\bibitem{Norlund-No-CM-JNT.tex}
F. Qi, \textit{An integral representation, complete monotonicity, and inequalities of Cauchy numbers of the second kind}, J. Number Theory \textbf{144} (2014), 244\nobreakdash--255; Available online at \url{http://dx.doi.org/10.1016/j.jnt.2014.05.009}.

\bibitem{Filomat-36-73-1.tex}
F. Qi, \textit{Explicit formulas for computing Bernoulli numbers of the second kind and Stirling numbers of the first kind}, Filomat \textbf{28} (2014), no.~2, 319\nobreakdash--327; Available online at \url{http://dx.doi.org/10.2298/FIL1402319O}.

\bibitem{1st-Sirling-Number-2012.tex}
F. Qi, \textit{Integral representations and properties of Stirling numbers of the first kind}, J. Number Theory \textbf{133} (2013), no.~7, 2307\nobreakdash--2319; Available online at \url{http://dx.doi.org/10.1016/j.jnt.2012.12.015}.

\bibitem{Bernoulli-Stirling-4P.tex}
F. Qi and B.-N. Guo, \textit{Alternative proofs of a formula for Bernoulli numbers in terms of Stirling numbers}, Analysis (Berlin) \textbf{34} (2014), no.~3, 311\nobreakdash--317; Available online at \url{http://dx.doi.org/10.1515/anly-2014-0003}.

\bibitem{Qi-Springer-2012-Srivastava.tex}
F. Qi, Q.-M. Luo, and B.-N. Guo, \textit{The function $(b^x-a^x)/x$: Ratio's properties}, In: \emph{Analytic Number Theory, Approximation Theory, and Special Functions}, G. V. Milovanovi\'c and M. Th. Rassias (Eds), Springer, 2014, pp.~485\nobreakdash--494; Available online at \url{http://dx.doi.org/10.1007/978-1-4939-0258-3_16}.

\bibitem{Bernoulli2nd-Property.tex}
F. Qi and X.-J. Zhang, \textit{An integral representation, some inequalities, and complete monotonicity of Bernoulli numbers of the second kind}; available online at \url{http://arxiv.org/abs/1301.6425}.

\bibitem{Sibuya-AISM-1988}
M. Sibuya, \emph{Log-concavity of Stirling numbers and unimodality of Stirling distributions}, Ann. Inst. Statist. Math. \textbf{40} (1988), no.~4, 693\nobreakdash--714; Available online at \url{http://dx.doi.org/10.1007/BF00049427}.

\bibitem{Zhang-Yang-Oxford-Taiwan-12}
Z.-Z. Zhang and J.-Z. Yang, \emph{Notes on some identities related to the partial Bell polynomials}, Tamsui Oxf. J. Inf. Math. Sci. \textbf{28} (2012), no.~1, 39\nobreakdash--48.

\end{thebibliography}
\end{document}